\newtheorem{theorem}{Theorem}[section]
\newtheorem{lemma}[theorem]{Lemma}
\newtheorem{proposition}[theorem]{Proposition}
\theoremstyle{plain}
\theoremstyle{definition}
\newtheorem{definition}[theorem]{Definition}
\numberwithin{equation}{section}
\renewcommand{\labelenumi}{\textup{(\theenumi)}}
\newcommand{\Homeo}{\operatorname{Homeo}}
\newcommand{\id}{\operatorname{id}}
\newcommand{\Ker}{\operatorname{Ker}}
\newcommand{\Min}{\operatorname{Min}}
\newcommand{\Ad}{\operatorname{Ad}}
\def\det{{{\operatorname{det}}}}
\newcommand{\N}{\mathbb{N}}
\newcommand{\R}{\mathbb{R}}
\newcommand{\T}{\mathbb{T}}
\newcommand{\Z}{\mathbb{Z}}
\newcommand{\Zp}{{\mathbb{Z}}_+}
\title{Uniformly continuous orbit equivalence of Markov shifts and gauge actions on Cuntz--Krieger algebras}
\author{Kengo Matsumoto \\
Department of Mathematics \\
Joetsu University of Education \\
Joetsu, 943-8512, Japan
}
\date{}
\begin{document}
\maketitle

\def\det{{{\operatorname{det}}}}

\begin{abstract}
We introduce a notion of uniformly continuous orbit equivalence 
as a subequivalence relation of continuous orbit equivalence of 
 one-sided topological Markov shifts.
It is described in terms of gauge actions on the associated Cuntz--Krieger algebras and continuous full groups of the Markov shifts.
\end{abstract}




\def\OA{{{\mathcal{O}}_A}}
\def\OB{{{\mathcal{O}}_B}}
\def\OZ{{{\mathcal{O}}_Z}}
\def\OTA{{{\mathcal{O}}_{\tilde{A}}}}
\def\SOA{{{\mathcal{O}}_A}\otimes{\mathcal{K}}}
\def\SOB{{{\mathcal{O}}_B}\otimes{\mathcal{K}}}
\def\SOZ{{{\mathcal{O}}_Z}\otimes{\mathcal{K}}}
\def\SOTA{{{\mathcal{O}}_{\tilde{A}}\otimes{\mathcal{K}}}}
\def\DA{{{\mathcal{D}}_A}}
\def\DB{{{\mathcal{D}}_B}}
\def\DZ{{{\mathcal{D}}_Z}}
\def\FA{{{\mathcal{F}}_A}}
\def\FB{{{\mathcal{F}}_B}}
\def\DTA{{{\mathcal{D}}_{\tilde{A}}}}
\def\SDA{{{\mathcal{D}}_A}\otimes{\mathcal{C}}}
\def\SDB{{{\mathcal{D}}_B}\otimes{\mathcal{C}}}
\def\SDZ{{{\mathcal{D}}_Z}\otimes{\mathcal{C}}}
\def\SDTA{{{\mathcal{D}}_{\tilde{A}}\otimes{\mathcal{C}}}}
\def\Max{{{\operatorname{Max}}}}
\def\AF{{{\operatorname{AF}}}}
\def\Per{{{\operatorname{Per}}}}
\def\PerB{{{\operatorname{PerB}}}}
\def\Homeo{{{\operatorname{Homeo}}}}
\def\HA{{{\frak H}_A}}
\def\HB{{{\frak H}_B}}
\def\HSA{{H_{\sigma_A}(X_A)}}
\def\Out{{{\operatorname{Out}}}}
\def\Aut{{{\operatorname{Aut}}}}
\def\Ad{{{\operatorname{Ad}}}}
\def\Inn{{{\operatorname{Inn}}}}
\def\det{{{\operatorname{det}}}}
\def\exp{{{\operatorname{exp}}}}
\def\cobdy{{{\operatorname{cobdy}}}}
\def\Ker{{{\operatorname{Ker}}}}
\def\ind{{{\operatorname{ind}}}}
\def\id{{{\operatorname{id}}}}
\def\supp{{{\operatorname{supp}}}}
\def\co{{{\operatorname{co}}}}
\def\Sco{{{\operatorname{Sco}}}}
\def\ActA{{{\operatorname{Act}_{\DA}(\mathbb{T},\OA)}}}
\def\ActB{{{\operatorname{Act}_{\DB}(\mathbb{T},\OB)}}}
\def\RepOA{{{\operatorname{Rep}(\mathbb{T},\OA)}}}
\def\RepDA{{{\operatorname{Rep}(\mathbb{T},\DA)}}}
\def\RepDB{{{\operatorname{Rep}(\mathbb{T},\DB)}}}
\def\U{{{\mathcal{U}}}}
\def\coe{{{\operatorname{coe}}}}
\def\scoe{{{\operatorname{scoe}}}}
\def\uoe{{{\operatorname{uoe}}}}
\def\ucoe{{{\operatorname{ucoe}}}}
\def\event{{{\operatorname{event}}}}

\section{Introduction and Preliminaries}

For an $N \times N$ irreducible non permutation matrix $A= [A(i,j)]_{i,j=1}^N$
with entries in $\{0,1\}$,
the shift space $X_A$  of one-sided topological Markov shift $(X_A,\sigma_A)$
is defined by
\begin{equation}
X_A = \{ (x_n)_{n \in \N} \in \{ 1,\dots,N \}^\N
 \mid A(x_n,x_{n+1}) =1 \text{ for all } n \in \N \}
\end{equation}
where $\N$ denotes the set of positive integers.
It is a compact Hausdorff space by the relative topology of 
$\{1,\dots, N \}^\N $ with the infinite product topology.
It has a shift transformation $\sigma_A$ defined by 
$\sigma_A((x_n)_{n \in \N}) = (x_{n+1})_{n \in \N}$.
The topological dynamical system 
$(X_A,\sigma_A)$ is called the one-sided topological Markov shift defined by the matrix 
$A$.
The author has introduced a notion of continuous orbit equivalence in the class of one-sided topological Markov shifts to classify the Cuntz--Krieger algebras in \cite{MaPacific}.
One-sided topological Markov shifts
$(X_A, \sigma_A)$ and $(X_B,\sigma_B)$ 
are said to be continuously orbit equivalent written
$(X_A, \sigma_A)\underset{\coe}{\sim}(X_B,\sigma_B)$ 
if there exist a homeomorphism
$h: X_A \rightarrow X_B$ 
and continuous functions 
$k_1,l_1: X_A\rightarrow \Zp,
 k_2,l_2: X_B\rightarrow \Zp 
$
such that
\begin{align}
\sigma_B^{k_1(x)} (h(\sigma_A(x))) 
& = \sigma_B^{l_1(x)}(h(x))
\quad \text{ for} \quad 
x \in X_A,  \label{eq:orbiteq1x} \\
\sigma_A^{k_2(y)} (h^{-1}(\sigma_B(y))) 
& = \sigma_A^{l_2(y)}(h^{-1}(y))
\quad \text{ for } \quad 
y \in X_B \label{eq:orbiteq2y}
\end{align}
where $\Zp$ denotes the set of nonnegative integers.
The functions $c_1 = l_1 - k_1, c_2 = l_2 - k_2$
are called the cocycle functions for $h, h^{-1}$, respectively. 
The continuous orbit equivalence class of $(X_A,\sigma_A)$
naturally yields a subgroup of homeomorphism group on $X_A$ which is called 
the continuous full group written $\Gamma_A$. 
The group
$\Gamma_A$ consists of 
homeomorphisms $\tau$ on $X_A$ such that there exist continuous functions 
$k, l : X_A \rightarrow \Zp$ 
such that 
\begin{equation}
\sigma_A^{k(x)}(\tau(x) )=\sigma_A^{l(x)}(x)
\quad\text{ 
for all } x \in X_A. \label{eq:tau}
\end{equation}
The group $\Gamma_A$ has been written $[\sigma_A]_c$ in the earlier papers
(\cite{MaPacific}, \cite{MaDCDS}).
The Cuntz--Krieger algebra $\OA$ is defined by the universal $C^*$-algebra
generated by partial isometries 
$S_1,\dots,S_N$ satisfying the relations:
\begin{equation}
\sum_{j=1}^N S_j S_j^* =1,\qquad
S_i^* S_i = \sum_{j=1}^N A(i,j) S_j S_j^*, \qquad i=1,\dots,N. \label{eq:CK}
\end{equation}
We denote by 
$\DA$ 
the $C^*$-subalgebra of $\OA$
generated by the projections of the form:
$S_{\mu_1}\cdots S_{\mu_n}S_{\mu_n}^* \cdots S_{\mu_1}^*,
\mu_1,\dots, \mu_n =1,\dots,N$.
The subalgebra $\DA$ is naturally isomorphic to the commutative 
$C^*$-algebra $C(X_A)$
of the complex valued continuous functions on $X_A$
by identifying the projection
$S_{\mu_1}\cdots S_{\mu_n}S_{\mu_n}^* \cdots S_{\mu_1}^*
$
with the characteristic function
$\chi_{U_{\mu_1\cdots \mu_n}} \in C(X_A)$
of the cylinder set
$U_{\mu_1\cdots \mu_n}$
for the word
${\mu_1\cdots \mu_n}$.
 
H. Matui and the author have finally reached the following classification result:
\begin{theorem}[\cite{MMKyoto}, cf. \cite{MaPacific}, \cite{MaPAMS2013}, \cite{MaIsrael2015}, \cite{MatuiCrelle}] \label{thm:mm}
Let $A$ and $B$ be irreducible, non permutation matrices with entries in $\{0,1\}$.
The following are equivalent.
\begin{enumerate}
\renewcommand{\theenumi}{\roman{enumi}}
\renewcommand{\labelenumi}{\textup{(\theenumi)}}
\item 
$(X_A, \sigma_A)$ and $(X_B,\sigma_B)$
are continuously orbit equivalent.
\item The Cuntz--Krieger algebras 
$\OA$ and $\OB$ are isomorphic and $\det(\id - A) = \det(\id - B)$.   
\item There exists an isomorphism
$\Phi:\OA \rightarrow \OB$ 
such that $\Phi(\DA) = \DB$.
\item The continuous full groups $\Gamma_A$ and $\Gamma_B$
are isomorphic as groups.\item
There exists a homeomorphism $h: X_A \longrightarrow X_B$ such that 
$
h\circ \Gamma_A \circ h^{-1} = \Gamma_B.
$
\end{enumerate}
\end{theorem}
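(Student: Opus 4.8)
The plan is to prove the five conditions equivalent not by a single argument but by assembling three reconstruction principles and arranging the implications into one strongly connected cycle: the core equivalence (i)$\Leftrightarrow$(iii), the classification pair (iii)$\Rightarrow$(ii)$\Rightarrow$(iii), and the full-group cycle (i)$\Rightarrow$(v)$\Rightarrow$(iv)$\Rightarrow$(i). For (i)$\Leftrightarrow$(iii) I would use the identification of $\OA$ with the groupoid $C^*$-algebra of the Deaconu--Renault groupoid
\[
G_A = \{ (x, k-l, y) \in X_A \times \Z \times X_A : k,l \in \Zp,\ \sigma_A^k(x) = \sigma_A^l(y) \},
\]
under which $\DA$ becomes $C(X_A)$, the algebra of functions on the unit space of $G_A$. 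Since $\DA$ is a \emph{Cartan} (maximal abelian, with faithful conditional expectation and sufficiently many normalizers) subalgebra, the Cartan reconstruction theorem shows that any isomorphism $\Phi$ with $\Phi(\DA) = \DB$ is induced by an isomorphism of topological groupoids $G_A \cong G_B$; unwinding this isomorphism over $X_A$ yields exactly a homeomorphism $h$ together with continuous cocycle functions $k_i,l_i$ satisfying \eqref{eq:orbiteq1x}--\eqref{eq:orbiteq2y}. Conversely, given a continuous orbit equivalence, the explicit formulas of \cite{MaPacific} define $\Phi(S_i)$ from the generators of $\OB$ twisted by the cocycle $c_1 = l_1 - k_1$; continuity forces $k_i,l_i$ to be locally constant, hence bounded, and one then checks directly that the images satisfy the Cuntz--Krieger relations \eqref{eq:CK} for $B$ and that $\Phi(\DA) = \DB$.

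For (iii)$\Rightarrow$(ii) the isomorphism $\OA \cong \OB$ is immediate, so the real content is that the equivalence forces $\det(\id - A) = \det(\id - B)$. I would extract this value from the induced isomorphism on the homology of $G_A$ (equivalently on $K$-theory, together with the position of $[1]$ and the behaviour of the cocycle), viewing the exact determinant as a refinement of the Bowen--Franks datum $(\BF(A),[1])$ that sharpens Franks' flow-equivalence invariant $(\BF(A),\sgn\det(\id - A))$ to its exact value. For (ii)$\Rightarrow$(iii), since $A$ is irreducible and not a permutation, $\OA$ is a unital Kirchberg algebra in the UCT class, so an abstract isomorphism $\OA \cong \OB$ is governed by $(K_0,[1],K_1)$ via Kirchberg--Phillips; the extra hypothesis $\det(\id - A) = \det(\id - B)$ is precisely what permits realizing this isomorphism by a finite chain of diagonal-preserving elementary moves (in/out-splittings and the $C^*$-algebraic versions of strong shift equivalence), producing a $\Phi$ with $\Phi(\DA) = \DB$.

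For the full-group cycle, (i)$\Rightarrow$(v) is nearly immediate once one observes that $\Gamma_A$ is intrinsic to the orbit structure, so a continuous orbit equivalence $h$ satisfies $h\circ\Gamma_A\circ h^{-1} = \Gamma_B$; and (v)$\Rightarrow$(iv) is trivial, conjugation by $h$ being a group isomorphism. The substantial implication is (iv)$\Rightarrow$(i): an \emph{abstract} group isomorphism $\Gamma_A \cong \Gamma_B$ must be shown to be spatially implemented. Following Matui's reconstruction method for topological full groups \cite{MatuiCrelle}, I would recover the Boolean algebra of clopen subsets of $X_A$ from the lattice of supports of elements of $\Gamma_A$ (via centralizers of involutions and commutator structure), thereby reconstructing $X_A$ and a homeomorphism $h$, and then verify that $h$ intertwines the dynamics up to the orbit cocycle, i.e.\ is a continuous orbit equivalence.

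The hard part will be the two reconstruction steps that run against the natural forgetful direction. Passing from a bare $*$-isomorphism plus determinant datum in (ii) to a diagonal-preserving isomorphism in (iii) is delicate because Kirchberg--Phillips exerts no control over the diagonal, so the determinant condition must be leveraged exactly to keep the realizing moves inside the class preserving $\DA$. Even harder is (iv)$\Rightarrow$(i), where recovering the topology and the return-time cocycle of the shift from the purely group-theoretic data of $\Gamma_A$ demands an intrinsic characterization of the clopen sets and of the orbit structure; this spatial-realization step is where the bulk of the technical work concentrates, and it is the place I expect to lean most heavily on the synthesis carried out in \cite{MMKyoto}.
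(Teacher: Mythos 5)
First, a framing point: the paper does not prove Theorem \ref{thm:mm} at all. It is imported as known background from \cite{MMKyoto} (cf.\ \cite{MaPacific}, \cite{MaPAMS2013}, \cite{MaIsrael2015}, \cite{MatuiCrelle}), and the paper's own proof effort goes entirely into Theorem \ref{thm:main}. So your proposal can only be judged against the proofs in those references. Measured that way, your architecture is largely the correct one: (i)$\Leftrightarrow$(iii) via the Deaconu--Renault groupoid and Renault's Cartan reconstruction is exactly how \cite{MMKyoto} completed (and repaired) the approach of \cite{MaPacific}; the cycle (i)$\Rightarrow$(v)$\Rightarrow$(iv)$\Rightarrow$(i), with a Rubin/Matui-type spatial realization turning an abstract isomorphism $\Gamma_A\cong\Gamma_B$ into a conjugating homeomorphism, is how \cite{MaIsrael2015} and \cite{MatuiCrelle} handle the full groups; and (ii)$\Rightarrow$(iii) via Franks' flow-equivalence classification together with diagonal-preserving realizations of the elementary moves (plus a stable-to-unital reduction that you acknowledge but do not carry out) is the strategy of \cite{MaPAMS2013} and \cite{MMKyoto}.

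The genuine gap is your (iii)$\Rightarrow$(ii), i.e.\ the determinant equality. You propose to extract $\det(\id-A)$ ``from the induced isomorphism on the homology of $G_A$ (equivalently on $K$-theory, together with the position of $[1]$\dots)''. No such extraction can exist, because these invariants provably do not determine the determinant: by Franks' realization theorem there are irreducible non-permutation matrices $A$, $B$ with $\OA\cong\OB\cong{\mathcal{O}}_2$, hence trivial Bowen--Franks groups $\BF(A)\cong\BF(B)\cong 0$, identical (trivial) unit classes, and isomorphic groupoid homology ($H_0(G_A)\cong\BF(A)$, $H_1(G_A)\cong\Ker(\id-A^t)$), yet $\det(\id-A)=-1$ while $\det(\id-B)=+1$; such pairs are precisely why the determinant enters (ii) as an invariant genuinely beyond the $C^*$-algebra. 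Since every invariant you name takes equal values on such a pair, they cannot logically force $\det(\id-A)=\det(\id-B)$ from a groupoid isomorphism. What the cited proof actually uses is finer, \emph{ordered} data: a continuous orbit equivalence induces, via the cocycle-transfer map $\Psi_h$ of \eqref{eq:Psih}, an isomorphism of the ordered cohomology groups $H^A\rightarrow H^B$ preserving the positive cones (\cite{MMETDS}); the Boyle--Handelman theorem \cite{BH} then yields flow equivalence of the two-sided shifts, and Parry--Sullivan/Franks theory converts flow equivalence into the equality of determinants. Your phrase ``the behaviour of the cocycle'' gestures toward $\Psi_h$, but your proposal contains neither the positive cone---the only structure that distinguishes the $\pm 1$ cases---nor the flow-equivalence bridge, so this step is not repairable along the route you spell out. (A secondary, smaller soft spot: your direct construction of $\Phi$ from a continuous orbit equivalence ``by checking the relations \eqref{eq:CK}'' is exactly the point where the original argument of \cite{MaPacific} had a gap; it is cleaner to run this direction, too, through the groupoid isomorphism you already have.)
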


For $t \in {\mathbb{R}}/\Z = {\mathbb{T}}$,
the correspondence
$S_i \rightarrow e^{2 \pi\sqrt{-1}t}S_i,
\, i=1,\dots,N$
gives rise to an automorphism
of $\OA$ denoted by $\rho^A_t$.
The automorphisms $\rho^A_t, t \in {\mathbb{T}}$
yield an action of ${\mathbb{T}}$
on $\OA$ called the gauge action.
The gauge action is a basic tool to analyze the structure of the Cuntz--Krieger algebra
$\OA$ as in \cite{CK} and is closely related to dynamical structure of the underlying topological Markov shift $(X_A,\sigma_A)$. 
Let us denote by $\FA$ the fixed point subalgebra of  $\OA$ under the gauge action
$\rho^A$. 
It is well-known that  $\FA$ is an AF algebra whose $K_0$-group is known as the dimension group of the underlying topological Markov shift.
The subalgebra of $\FA$ consisting of diagonal elements coincides with the maximal commutative $C^*$-subalgebra $\DA$ defined above.
 There is a discrete subgroup of $\Gamma_A$ 
which gives rise to unitaries of finite dimensional subalgebras of the AF algebra $\FA$. 
It is a group of homeomorphism $\tau$ in $\Gamma_A$ 
for which one may take $k(x) = l(x)$ in \eqref{eq:tau}.
Since the function $k(x)$ is continuous, it may be chosen to be a constant number written
$K_{\tau}$.
We write the group
as $\Gamma_A^{\AF}$, 
which has been written ${[\sigma_A]}_{\AF}$ in \cite[Section 7]{MaPacific}.
One-sided topological Markov shifts
$(X_A, \sigma_A)$ and $(X_B,\sigma_B)$ 
are said to be {\it uniformly orbit equivalent}\/ (\cite[Section 7]{MaPacific}) 
if there exist a homeomorphism
$h: X_A \rightarrow X_B$ 
such that for any $\tau_1 \in \Gamma_A^{\AF}, \tau_2 \in \Gamma_B^{\AF}, $
there exist continuous functions 
$k_1: X_A\rightarrow \Zp,
 k_2: X_B\rightarrow \Zp 
$
such that
\begin{align}
\sigma_B^{k_1(x)} (h(\tau_1(x))) 
& = \sigma_B^{k_1(x)}(h(x))
\quad \text{ for} \quad 
x \in X_A,  \label{eq:uorbiteq1x} \\
\sigma_A^{k_2(y)} (h^{-1}(\tau_2(y))) 
& = \sigma_A^{k_2(y)}(h^{-1}(y))
\quad \text{ for } \quad 
y \in X_B. \label{eq:uorbiteq2y}
\end{align}
This situation is written
$(X_A, \sigma_A)\underset{\uoe}{\sim}(X_B,\sigma_B).$ 
In this case, both functions $k_1, k_2$ are continuous, so that 
one may take them to be natural numbers, written $K_{\tau_1}, K_{\tau_2}$, such that  
\begin{align}
\sigma_B^{K_{\tau_1}} (h(\tau_1(x))) 
& = \sigma_B^{K_{\tau_1}}(h(x))
\quad \text{ for} \quad 
x \in X_A,  \label{eq:uorbiteq1K} \\
\sigma_A^{K_{\tau_2}} (h^{-1}(\tau_2(y))) 
& = \sigma_A^{K_{\tau_2}}(h^{-1}(y))
\quad \text{ for } \quad 
y \in X_B. \label{eq:uorbiteq2K}
\end{align}
The following proposition has been seen in \cite{MaPacific}.
\begin{proposition}[cf. {\cite[Theorem 7.4]{MaPacific}}, \cite{Kr}, \cite{Kr2}, \cite{SV}]\label{prop:1.3}
Let $A$ and $B$ be irreducible, non permutation matrices with entries in $\{0,1\}$.
The following are equivalent.
\begin{enumerate}
\renewcommand{\theenumi}{\roman{enumi}}
\renewcommand{\labelenumi}{\textup{(\theenumi)}}
\item 
$(X_A, \sigma_A)$ and $(X_B,\sigma_B)$
are uniformly orbit equivalent.
\item There exists an isomorphism
$\Phi:\FA \rightarrow \FB$ 
such that $\Phi(\DA) = \DB$.
\item
There exists a homeomorphism $h: X_A \longrightarrow X_B$ such that 
$
h\circ \Gamma_A^{\AF} \circ h^{-1} = \Gamma_B^{\AF}.
$
\end{enumerate}
\end{proposition}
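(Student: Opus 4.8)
The plan is to establish the equivalences through the chain (i) $\Leftrightarrow$ (iii) $\Leftrightarrow$ (ii), placing all of the analytic content in the second equivalence. The equivalence (i) $\Leftrightarrow$ (iii) is essentially a restatement of the definitions: given the homeomorphism $h$ of uniform orbit equivalence and $\tau_1 \in \Gamma_A^{\AF}$, set $\tau_1' := h \circ \tau_1 \circ h^{-1}$; substituting $x = h^{-1}(y)$ in \eqref{eq:uorbiteq1K} gives $\sigma_B^{K_{\tau_1}}(\tau_1'(y)) = \sigma_B^{K_{\tau_1}}(y)$ for all $y \in X_B$, which is exactly the condition $\tau_1' \in \Gamma_B^{\AF}$, so $h \Gamma_A^{\AF} h^{-1} \subseteq \Gamma_B^{\AF}$, and \eqref{eq:uorbiteq2K} yields the reverse inclusion. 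Reading the same equations backward proves (iii) $\Rightarrow$ (i): for $\tau_1 \in \Gamma_A^{\AF}$ the hypothesis puts $h\tau_1 h^{-1}$ in $\Gamma_B^{\AF}$ with some constant $K$, and setting $y = h(x)$ recovers \eqref{eq:uorbiteq1K}.

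For the equivalence (ii) $\Leftrightarrow$ (iii) I would use the realization of $\FA$ as the AF algebra of the tail-equivalence relation
\[
R_A = \{ (x,x') \in X_A \times X_A \mid \sigma_A^k(x) = \sigma_A^k(x') \text{ for some } k \in \Zp \},
\]
an increasing union of the compact open subgroupoids on which $k$ is fixed, with $\DA \cong C(X_A)$ sitting inside $\FA$ as the Cartan subalgebra of functions on the unit space $X_A$. The first step, following \cite{MaPacific}, is to identify $\Gamma_A^{\AF}$ with the topological full group of $R_A$: each $\tau$ with $\sigma_A^{K_\tau}\circ\tau = \sigma_A^{K_\tau}$ has compact open graph inside $R_A$ and is implemented by a unitary $u_\tau$ in a finite-dimensional subalgebra of $\FA$ normalizing $\DA$, with $\Ad(u_\tau)$ inducing $\tau$ on $C(X_A)$; conversely a unitary normalizing $\DA$ induces a homeomorphism whose graph is compact and open in $R_A$, hence contained in one of the finite levels by compactness, so the induced homeomorphism lies in $\Gamma_A^{\AF}$. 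I would also record the combinatorial fact that the $R_A$-classes coincide with the $\Gamma_A^{\AF}$-orbits: for $x \ne x'$ with $\sigma_A^k(x) = \sigma_A^k(x')$, passing to level $K = k+1$ makes the prefixes $x_1\cdots x_K$ and $x'_1\cdots x'_K$ share their terminal symbol $x_{K+1} = x'_{K+1}$, so the tail-preserving transposition swapping the cylinders $U_{x_1\cdots x_K}$ and $U_{x'_1\cdots x'_K}$ and fixing the complement lies in $\Gamma_A^{\AF}$ and carries $x$ to $x'$.

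Granting this dictionary, (iii) $\Rightarrow$ (ii) proceeds by showing that the conjugacy $h\Gamma_A^{\AF}h^{-1} = \Gamma_B^{\AF}$ upgrades $h$ to an isomorphism of \'etale groupoids $R_A \cong R_B$: by the orbit description $h \times h$ maps $R_A$ bijectively onto $R_B$, and since the graphs of clopen restrictions of full-group elements form a basis of compact open bisections for each groupoid, conjugation by the homeomorphism $h$ carries bisections to bisections, so $h \times h$ is continuous and open. The reconstruction of an AF diagonal pair from its groupoid (\cite{SV}) then produces an isomorphism $\Phi : \FA \to \FB$ with $\Phi(\DA) = \DB$. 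Conversely, for (ii) $\Rightarrow$ (iii) let $h : X_A \to X_B$ be the homeomorphism induced by $\Phi|_{\DA} : \DA \to \DB$; since $\Phi$ preserves the diagonal it carries $N_{\FA}(\DA)$ onto $N_{\FB}(\DB)$, so for $\tau \in \Gamma_A^{\AF}$ the unitary $\Phi(u_\tau)$ normalizes $\DB$ and, by the converse half of the dictionary, induces an element of $\Gamma_B^{\AF}$, which the intertwining between $\Phi|_{\DA}$ and $h$ identifies with $h\tau h^{-1}$; applying this to $\Phi^{-1}$ as well gives $h\Gamma_A^{\AF}h^{-1} = \Gamma_B^{\AF}$.

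The step I expect to be the main obstacle is (iii) $\Rightarrow$ (ii): promoting the set-theoretic conjugacy of full groups to a genuine isomorphism of topological groupoids, and thence of Cartan pairs. Concretely one must verify that $h \times h$ is not merely a bijection respecting the equivalence relations but is a homeomorphism for the inductive-limit topologies, i.e.\ that it sends compact open bisections to compact open bisections; this is where the realization of full-group elements as tail-preserving cylinder transpositions, and the compatibility of $h$ with the cylinder structure, do the real work. The finite-level (AF) nature of $\FA$ is what forces the twist to be trivial, so that the groupoid isomorphism lifts to a $*$-isomorphism carrying diagonal to diagonal.
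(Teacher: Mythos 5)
The paper never proves Proposition \ref{prop:1.3}: it is recalled verbatim from \cite[Theorem 7.4]{MaPacific}, with \cite{Kr}, \cite{Kr2}, \cite{SV} cited for the underlying AF/dimension-group machinery, so there is no internal proof to compare yours against. Judged on its own, your argument is correct, and it is essentially the argument that the cited sources run. Your (i) $\Leftrightarrow$ (iii) step is indeed a pure definition chase: after substituting $y = h(x)$, the constants $K_{\tau_1}, K_{\tau_2}$ in \eqref{eq:uorbiteq1K}--\eqref{eq:uorbiteq2K} are exactly the membership conditions for $h\tau_1 h^{-1} \in \Gamma_B^{\AF}$ and $h^{-1}\tau_2 h \in \Gamma_A^{\AF}$. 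Your (ii) $\Leftrightarrow$ (iii) step --- realizing $(\FA,\DA)$ as the Cartan pair of the tail-equivalence AF groupoid $R_A = \bigcup_K R_A^K$, identifying $\Gamma_A^{\AF}$ with its topological full group, and invoking Renault/Stratila--Voiculescu reconstruction --- is the standard mechanism behind \cite[Theorem 7.4]{MaPacific} and \cite{SV}; the two pivotal observations you make are the right ones, namely that a compact open graph inside the increasing open union $\bigcup_K R_A^K$ must lie in a single level $R_A^K$ (so unitary normalizers of $\DA$ in $\FA$ give back elements of $\Gamma_A^{\AF}$), and that basic bisections of $R_A$ are clopen restrictions of graphs of swap involutions in $\Gamma_A^{\AF}$ (so the set conjugacy $h\circ\Gamma_A^{\AF}\circ h^{-1} = \Gamma_B^{\AF}$ really does upgrade $h\times h$ to a homeomorphism $R_A \cong R_B$ for the inductive-limit topologies). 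One small index slip: in your transposition argument with $K = k+1$, the relevant matching symbols are the \emph{terminal} symbols of the two length-$K$ prefixes, $x_K = x'_K$ (equivalently $x_{k+1} = x'_{k+1}$), not $x_{K+1} = x'_{K+1}$; both equalities follow from $\sigma_A^k(x) = \sigma_A^k(x')$, but it is the former that makes the cylinder swap admissibility-preserving, hence a well-defined element of $\Gamma_A^{\AF}$. This is harmless. Note also that for $L \ge K$ the terminal symbols of the two length-$L$ prefixes of any pair in $R_A^K$ agree automatically, which slightly simplifies your basis claim.
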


We note that canonical maximal abelian $C^*$-subalgebras of $\FA$ are unique
up to isomorphism on $\FA$ (cf. \cite{SV}). 
Hence the above condition (ii) is equivalent to the condition that $\FA$ is isomorphic to $\FB$.

In this paper, we will study relationships among classification of gauge actions on Cuntz--Krieger algebras, continuous orbit equivalence and continuous full groups.
We will show an analogue of Theorem \ref{thm:mm} for gauge actions
referring to Proposition \ref{prop:1.3}.
 In the definition of continuous orbit equivalence, if one may take 
$l_1(x) = k_1(x) +1, x \in X_A$
in \eqref{eq:orbiteq1x}
and
$l_2(y) = k_2(y) +1, y \in X_B$
in \eqref{eq:orbiteq2y},
then $(X_A,\sigma_A)$ and $(X_B,\sigma_B)$
are said to be {\it eventually one-sided conjugate\/} (\cite{MaPre2015}).
This situation is written
$(X_A, \sigma_A)\underset{\event}{\approx}(X_B,\sigma_B).$ 
In this case,
one may take the functions  
$k_1, k_2$ to be the constant functions 
taking its values 
$
K_1=\Max \{k_1(x) \mid x \in X_A\},
K_2=\Max \{k_2(y) \mid y \in X_B\}$.
Hence it is easy to see that
$(X_A, \sigma_A)\underset{\event}{\approx}(X_B,\sigma_B)$ 
if and only if
 there exist a homeomorphism
$h: X_A \rightarrow X_B$ 
and  natural numbers $K_1, K_2 \in \N$
such that 
\begin{align}
\sigma_B^{K_1} (h(\sigma_A(x))) 
& = \sigma_B^{K_1+1}(h(x))
\quad \text{ for} \quad 
x \in X_A,  \label{eq:oneorbiteq1K} \\
\sigma_A^{K_2} (h^{-1}(\sigma_B(y))) 
& = \sigma_A^{K_2+1}(h^{-1}(y))
\quad \text{ for } \quad 
y \in X_B. \label{eq:oneorbiteq2K}
\end{align}

For the eventually conjugacy, we have obtained the following result.
\begin{proposition}[{\cite[Corollary 3.5]{MaPre2015}}]\label{prop:1.4}
Let $A$ and $B$ be irreducible, non permutation matrices with entries in $\{0,1\}$.
$(X_A, \sigma_A)$ and $(X_B,\sigma_B)$
are eventually one-sided conjugate
if and only if there exists an isomorphism
$\Phi:\OA \rightarrow \OB$ 
such that 
\begin{equation*}
\Phi(\DA) = \DB
\quad \text{  and } \quad
\Phi \circ \rho^A_t = \rho^B_t \circ \Phi, 
\qquad t \in {\mathbb{T}}.
\end{equation*}
\end{proposition}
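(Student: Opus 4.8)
The plan is to read the whole statement off the canonical $\Z$-grading of $\OA$ that the gauge action induces. Recall that $\FA$ is the degree-zero part, and more generally $\OA$ is the closure of $\bigoplus_{n\in\Z}\OA^{(n)}$, where $\OA^{(n)}=\{a\in\OA\mid\rho^A_t(a)=e^{2\pi\sqrt{-1}nt}a\}$; each monomial $S_\mu S_\nu^*$ lies in degree $|\mu|-|\nu|$ and $\DA$ sits in degree $0$. An isomorphism $\Phi$ with $\Phi(\DA)=\DB$ restricts to $\DA\cong\DB$, hence is dual to a homeomorphism between $X_A$ and $X_B$; the extra condition $\Phi\circ\rho^A_t=\rho^B_t\circ\Phi$ says exactly that $\Phi$ is graded, i.e. $\Phi(\OA^{(n)})=\OB^{(n)}$ for all $n$. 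So both implications will be phrased as: the homeomorphism intertwines the two shifts with an orbit cocycle that is identically $1$.

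For the forward direction, assume eventual one-sided conjugacy, so there are $h$ and $K_1,K_2\in\N$ satisfying \eqref{eq:oneorbiteq1K} and \eqref{eq:oneorbiteq2K}. First I would establish, by induction on $n$, the iterated identity
\[
\sigma_B^{nK_1}(h(\sigma_A^n(x)))=\sigma_B^{nK_1+n}(h(x)),\qquad x\in X_A,\ n\ge 0,
\]
together with its analogue for $h^{-1}$; the inductive step merely applies \eqref{eq:oneorbiteq1K} at the point $\sigma_A^n(x)$ and then shifts. This yields that whenever $\sigma_A^m(x)=\sigma_A^n(y)$ one has $\sigma_B^{(m+n)K_1+m}(h(x))=\sigma_B^{(m+n)K_1+n}(h(y))$, an equality whose excess of exponents is exactly $m-n$. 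Feeding $h$ into the standard construction of a diagonal-preserving isomorphism from a continuous orbit equivalence (the construction behind Theorem \ref{thm:mm}), each generator $S_i^A$ is sent to a finite sum of partial isometries $S_\nu^B(S_\xi^B)^*$ cut down by characteristic functions of the cylinders governed by $h$ and $K_1$, and the eventual-conjugacy relation is precisely what forces every such term to have degree $|\nu|-|\xi|=1$ (the cocycle value) while preserving the Cuntz--Krieger relations. Since generators go to degree-one elements and $\DA$ into $\DB$, the resulting $\Phi$ satisfies $\Phi(\DA)=\DB$ and, being graded, $\Phi\circ\rho^A_t=\rho^B_t\circ\Phi$.

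For the converse, suppose $\Phi\colon\OA\to\OB$ is an isomorphism with $\Phi(\DA)=\DB$ and $\Phi\circ\rho^A_t=\rho^B_t\circ\Phi$. Forgetting the gauge condition, $\Phi$ is diagonal-preserving, so by Theorem \ref{thm:mm} the shifts are continuously orbit equivalent: the homeomorphism $h$ dual to $\Phi|_{\DA}$, together with the cocycle $c_1=l_1-k_1$ recovered from $\Phi$, realize a continuous orbit equivalence. The remaining point is that grading-preservation forces $c_1\equiv 1$, which is the defining condition of eventual conjugacy. I would extract this by testing $\Phi$ on degree one: the image $\Phi(S_i^A)$ must lie in $\OB^{(1)}$, and in the coe reconstruction the powers of $\sigma_B$ appearing in the description of $\Phi(S_i^A)$ record $c_1$; gradedness pins them down so that they differ by exactly $1$, i.e. $c_1\equiv 1$, and symmetrically $c_2\equiv 1$ from $\Phi^{-1}$. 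Continuity of $c_1,c_2$ and compactness of $X_A, X_B$ then upgrade these to the constants $K_1,K_2$ of \eqref{eq:oneorbiteq1K}--\eqref{eq:oneorbiteq2K}, exactly as in the reduction from \eqref{eq:orbiteq1x} to \eqref{eq:oneorbiteq1K} recalled above.

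The main obstacle is this second direction: extracting genuine, uniformly bounded dynamical data from an abstract graded isomorphism. One must know that a diagonal-preserving isomorphism is implemented by the underlying shift dynamics, which rests on Condition (I) for irreducible non-permutation $A$ (topological principality of the shift) that makes the inclusion $\DA\subset\OA$ rigid and underlies Theorem \ref{thm:mm}; and then one must verify that gauge-equivariance corresponds precisely to the cocycle being the constant $1$, rather than a general $\Z$-valued continuous function. Converting the resulting pointwise identities into the single constants $K_1,K_2$ is the routine final step, using continuity and compactness.
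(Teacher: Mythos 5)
First, a point of calibration: the paper never proves Proposition \ref{prop:1.4} itself --- it is imported wholesale from \cite[Corollary 3.5]{MaPre2015} --- so your argument can only be measured against the machinery the paper quotes (the canonical isomorphism $\Phi_h$ satisfying $\Phi_h\circ\rho^{A,\Psi_h(g)}_t=\rho^{B,g}_t\circ\Phi_h$ for $g\in C(X_B,\Z)$) and against the closely parallel argument the paper does write out, namely (iv) $\Longrightarrow$ (iii) in the proof of Theorem \ref{thm:main}. Your forward direction is fine: under eventual conjugacy one may take $l_1=k_1+1$, so the cocycle $c_1=\Psi_h(1_{X_B})$ is identically $1$ and the canonical isomorphism $\Phi_h$ intertwines the gauge actions; your degree count on the terms of $\Phi_h(S_i)$ is a hands-on version of exactly this, and the iterated identity you prove by induction is correct.

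The converse --- which you yourself flag as the main obstacle --- has a genuine gap. From $\Phi(\DA)=\DB$ you obtain, via Theorem \ref{thm:mm}, a homeomorphism $h$ and a cocycle $c_1$, but your key step, ``in the coe reconstruction the powers of $\sigma_B$ appearing in the description of $\Phi(S_i^A)$ record $c_1$; gradedness pins them down,'' silently identifies the given abstract isomorphism $\Phi$ with the reconstructed isomorphism $\Phi_h$. These agree on $\DA$ but need not agree on the generators: for any $f\in C(X_B,\Z)$ and $s\in\T$, the composition $\rho^{B,f}_s\circ\Phi$ is again diagonal-preserving with the same dual homeomorphism $h$, yet takes different values on the $S_i^A$. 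So $\Phi(S_i^A)$ has no a priori ``description'' in terms of $h$ and $c_1$, and the inference from gradedness of $\Phi$ to $c_1\equiv 1$ is unjustified as stated. The missing bridge is exactly what the paper supplies in proving (iv) $\Longrightarrow$ (iii): set $\alpha=\Phi_h^{-1}\circ\Phi$, an automorphism of $\OA$ fixing $\DA$ pointwise; by \cite[Theorem 6.5 (1)]{MaPacific} it is implemented by a unitary one-cocycle $V_\alpha(k)\in\DA$, whence $\Phi(S_i)=\Phi_h(V_\alpha(1)S_i)$ as in \eqref{eq:PhiPhih}; since diagonal unitaries are gauge-fixed, gradedness of $\Phi$ then transfers to $\Phi_h$, and comparison with \eqref{eq:rhobc2} forces $\rho^B_t$ and $\rho^{B,c_2}_t$ to agree on the generators $\Phi_h(S_i)$, i.e.\ $c_2\equiv 1$; Lemma \ref{lem:useful} then gives $c_1\equiv 1$ as well, and continuity plus compactness produce the constants $K_1,K_2$. (Alternatively one could show that a graded diagonal-preserving isomorphism induces a groupoid isomorphism intertwining the canonical $\Z$-valued cocycles, but that statement also requires proof; it does not follow from knowing the dual homeomorphism alone.) Without one of these bridges your second direction does not close.
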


We introduce a unified notion of continuous orbit equivalence and uniformly
orbit equivalence in the following way. 
\begin{definition}
One-sided topological Markov shifts
$(X_A, \sigma_A)$ and $(X_B,\sigma_B)$ 
are said to be {\it uniformly continuously orbit equivalent}\/ 
if there exist a homeomorphism
$h: X_A \rightarrow X_B$ 
and continuous functions 
$k_1,l_1: X_A\rightarrow \Zp,
 k_2,l_2: X_B\rightarrow \Zp 
$
satisfying \eqref{eq:orbiteq1x} and \eqref{eq:orbiteq2y}
and
for any $\tau_1 \in \Gamma_A^{\AF}, \tau_2 \in \Gamma_B^{\AF}, $
there exist natural numbers $K_{\tau_1}, K_{\tau_2} \in \N$ 
satisfying \eqref{eq:uorbiteq1K}, \eqref{eq:uorbiteq2K}. 
\end{definition}
This situation is written
$(X_A, \sigma_A)\underset{\ucoe}{\sim}(X_B,\sigma_B).$

In the presented paper, we will show the following theorem.
\begin{theorem}\label{thm:main}
Let $A$ and $B$ be irreducible, non permutation matrices with entries in $\{0,1\}$.
The following are equivalent.
\begin{enumerate}
\renewcommand{\theenumi}{\roman{enumi}}
\renewcommand{\labelenumi}{\textup{(\theenumi)}}
\item 
$(X_A, \sigma_A)$ and $(X_B,\sigma_B)$
are eventually one-sided conjugate.
\item 
$(X_A, \sigma_A)$ and $(X_B,\sigma_B)$
are uniformly continuously orbit equivalent.
\item There exists an isomorphism
$\Phi:\OA \rightarrow \OB$ 
such that 
\begin{equation*}
\Phi(\DA) = \DB
\quad \text{  and } \quad
\Phi \circ \rho^A_t = \rho^B_t \circ \Phi, 
\qquad t \in {\mathbb{T}}
\end{equation*}
\item There exists an isomorphism
$\Phi:\OA \rightarrow \OB$ 
such that 
\begin{equation*}
\Phi(\DA) = \DB
\quad \text{  and } \quad
\Phi(\FA) = \FB
\end{equation*}
\item There exists an isomorphism $\xi: \Gamma_A \longrightarrow \Gamma_B$
of groups such that  $\xi(\Gamma_A^{\AF}) = \Gamma_B^{\AF}.$ 
\item
There exists a homeomorphism $h: X_A \longrightarrow X_B$ such that 
\begin{equation*}
h\circ \Gamma_A \circ h^{-1} = \Gamma_B \quad \text{ and }
\quad
 h\circ \Gamma_A^{\AF} \circ h^{-1} = \Gamma_B^{\AF}.
\end{equation*}
\end{enumerate}
\end{theorem}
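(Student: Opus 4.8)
The plan is to prove everything by combining the three results already available: Proposition~\ref{prop:1.4} supplies (i)$\Leftrightarrow$(iii) for free, while Theorem~\ref{thm:mm} and Proposition~\ref{prop:1.3} translate continuous orbit equivalence and uniform orbit equivalence into the preservation of $\DA\subset\OA$ and of $\DA\subset\FA$ respectively. Accordingly I would organize the argument around the scheme
\[
\text{(i)}\Leftrightarrow\text{(iii)}\Leftrightarrow\text{(iv)}\Leftrightarrow\text{(vi)}\Leftrightarrow\text{(v)},\qquad \text{(vi)}\Leftrightarrow\text{(ii)}.
\]
The implication (iii)$\Rightarrow$(iv) is immediate, since an isomorphism intertwining the gauge actions carries the $\rho^A$-fixed point algebra onto the $\rho^B$-fixed point algebra, so that $\Phi(\FA)=\FB$, while $\Phi(\DA)=\DB$ is assumed. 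The genuinely new analytic content is the converse (iv)$\Rightarrow$(iii), and this is where I expect the main difficulty to lie.

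For (iv)$\Rightarrow$(iii) I would exploit a rigidity property of the gauge action. Given $\Phi$ with $\Phi(\DA)=\DB$ and $\Phi(\FA)=\FB$, put $\beta_t:=\Phi^{-1}\circ\rho^B_t\circ\Phi$. Since $\rho^B_t$ fixes every diagonal projection $S_\mu S_\mu^*$, the action $\beta$ fixes $\DA$ pointwise and has fixed point algebra $\Phi^{-1}(\FB)=\FA$, exactly like $\rho^A$. Passing to the groupoid model $\OA=C^*(\mathcal{G}_A)$, $\DA=C(X_A)$ of the Cartan pair $\DA\subset\OA$, the action $\beta$ is implemented by a continuous cocycle $c\colon\mathcal{G}_A\to\Z$, whereas $\rho^A$ is implemented by the canonical degree cocycle $c^A$; moreover $\Phi$ induces a groupoid isomorphism $\psi\colon\mathcal{G}_A\to\mathcal{G}_B$ with $c=c^B\circ\psi$. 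The point is that $c^B$ is a surjection onto $\Z$ whose kernel is the AF subgroupoid corresponding to $\FB$, so the hypothesis $\Phi(\FA)=\FB$ forces $\ker c=\psi^{-1}(\ker c^B)=\ker c^A$; hence $c$ is a surjection onto $\Z$ with the same kernel as $c^A$, and therefore $c=\pm c^A$. The remaining sign is fixed to $+1$ by the non-invertibility of the shift, an asymmetry ($\sum_j S_jS_j^*=1$ but $S_j^*S_j\neq 1$ in general) preserved by any $\DA$-preserving isomorphism, so $c=c^A$ and $\beta=\rho^A$. Thus $\Phi$ already intertwines the gauge actions; if one prefers, one corrects $\beta$ to $\rho^A$ by an automorphism $\theta$ of $\OA$ with $\theta(\DA)=\DA$ and $\theta\circ\beta_t=\rho^A_t\circ\theta$, and then $\Phi\circ\theta^{-1}$ is the required intertwiner. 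The essential obstacle here is conceptual: one must upgrade the mere preservation of the degree-zero part $\FA$ to preservation of the whole $\Z$-grading, and the role of the hypothesis $\Phi(\FA)=\FB$ is precisely to pin down $\ker c=\ker c^A$.

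The equivalence (iv)$\Leftrightarrow$(vi) is then bookkeeping on top of the cited theorems. Given (iv), the restriction $\Phi|_{\DA}$ induces a homeomorphism $h\colon X_A\to X_B$; Theorem~\ref{thm:mm} applied to $(\Phi,\DA,\DB)$ gives $h\circ\Gamma_A\circ h^{-1}=\Gamma_B$, and Proposition~\ref{prop:1.3} applied to the restricted isomorphism $\Phi|_{\FA}\colon\FA\to\FB$ (which still carries $\DA$ onto $\DB$) gives $h\circ\Gamma_A^{\AF}\circ h^{-1}=\Gamma_B^{\AF}$, with the same $h$ because both are read off from $\Phi|_{\DA}$; this is (vi). Conversely, a homeomorphism $h$ as in (vi) induces a groupoid isomorphism carrying the AF subgroupoid of $\mathcal{G}_A$ onto that of $\mathcal{G}_B$, so the associated $*$-isomorphism $\Phi$ satisfies both $\Phi(\DA)=\DB$ and $\Phi(\FA)=\FB$, which is (iv).

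Finally, (vi)$\Leftrightarrow$(v) and (vi)$\Leftrightarrow$(ii) are routine. Conjugation $\xi=\Ad(h)$ turns (vi) into (v) at once, while the spatial realization contained in Theorem~\ref{thm:mm} recovers from any group isomorphism $\xi\colon\Gamma_A\to\Gamma_B$ a homeomorphism $h$ with $\Ad(h)=\xi$, so that $\xi(\Gamma_A^{\AF})=\Gamma_B^{\AF}$ upgrades to $h\circ\Gamma_A^{\AF}\circ h^{-1}=\Gamma_B^{\AF}$. For (vi)$\Leftrightarrow$(ii) one simply unwinds the definition: a uniformly continuous orbit equivalence is a single homeomorphism $h$ that is at once a continuous orbit equivalence and a uniform orbit equivalence, and Theorem~\ref{thm:mm} (respectively Proposition~\ref{prop:1.3}) identifies these two properties of the same $h$ with $h\circ\Gamma_A\circ h^{-1}=\Gamma_B$ (respectively $h\circ\Gamma_A^{\AF}\circ h^{-1}=\Gamma_B^{\AF}$). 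Together with Proposition~\ref{prop:1.4} for (i)$\Leftrightarrow$(iii), this closes all six equivalences.
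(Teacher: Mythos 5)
Your overall scheme (routing everything through (iii)$\Leftrightarrow$(iv) and (iv)$\Leftrightarrow$(vi), with Proposition~\ref{prop:1.4} supplying (i)$\Leftrightarrow$(iii)) is a legitimate alternative to the paper's decomposition, and the peripheral equivalences are acceptable modulo invoking Theorem~\ref{thm:mm} and Proposition~\ref{prop:1.3} in their ``for the specific induced homeomorphism'' form, which is also how the paper uses them. The fatal problem is in (iv)$\Rightarrow$(iii), the step you yourself single out as the essential content. You argue that $c:=c^B\circ\psi$ is a surjective continuous $\Z$-valued cocycle on $\mathcal{G}_A$ with $\ker c=\ker c^A$, ``and therefore $c=\pm c^A$.'' For groupoid cocycles this inference is false: unlike a surjective group homomorphism onto $\Z$, a groupoid cocycle is not determined up to sign by its kernel. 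Concretely, take the irreducible non-permutation matrix $A_0=\bigl[\begin{smallmatrix}0&1&1\\ 1&0&0\\ 1&0&0\end{smallmatrix}\bigr]$, whose admissible sequences alternate between the symbol $1$ and the symbols $\{2,3\}$, and let $f\in C(X_{A_0},\Z)$ be $f(x)=1$ if $x_1=1$ and $f(x)=2$ otherwise. The cocycle $c_f(x,k-l,y)=f^k(x)-f^l(y)$ (for $\sigma_{A_0}^k x=\sigma_{A_0}^l y$; this is exactly the cocycle implementing the action $\rho^{A_0,f}$ defined in Section~1) vanishes on the AF subgroupoid: if $\sigma_{A_0}^k x=\sigma_{A_0}^k y$, then alternation forces $x_i=1\Leftrightarrow y_i=1$ for every $i$, hence $f^k(x)=f^k(y)$. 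A short parity computation shows moreover that $\ker c_f$ equals the AF subgroupoid exactly and that $c_f$ is onto $\Z$ (its values are $3n/2$ for the even-lag elements and $(3n\pm1)/2$ for the odd-lag ones); yet $c_f(x,2,\sigma_{A_0}^2x)=f(x)+f(\sigma_{A_0}x)=3\neq\pm2$, so $c_f\neq\pm c^{A_0}$. In $C^*$-terms, $\rho^{A_0,f}$ is a circle action fixing $\mathcal{D}_{A_0}$ pointwise whose fixed-point algebra is exactly $\mathcal{F}_{A_0}$, but which is neither the gauge action nor its inverse. So ``surjective with the same kernel as $c^A$'' does not pin the cocycle down; your claim is true when $A$ is aperiodic (tail-equivalence classes are then dense, forcing $f$ to be constant), but the theorem allows irreducible matrices of period $>1$, where it fails. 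Note also that your argument never uses the one extra piece of structure you have, namely that $c$ is pulled back through an isomorphism $\psi$ of the two groupoids; exploiting that is precisely the hard part.

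A second, related defect: even granting $c=\pm c^A$, your sign argument is not an argument, since $-c^A$ is also surjective with kernel $\ker c^A$, and the asymmetry you invoke ($\sum_j S_jS_j^*=1$ while $S_j^*S_j\neq 1$) holds in both algebras irrespective of the sign. The paper resolves both points by a different mechanism: it shows that $\gamma^B_t=\Phi\circ\rho^A_t\circ\Phi^{-1}$ equals $\rho^{B,c_B}_t$ for a constant integer $c_B$ (via the unitary $W_t=\sum_i\gamma_t^B(S_i)S_i^*$, which commutes with $\FB$), then identifies $c_B$ with the orbit-equivalence cocycle function $c_2$ of the homeomorphism induced by $\Phi$, and finally applies Lemma~\ref{lem:useful}, whose proof rests on the positivity of cocycle sums along eventually periodic orbits (\cite{MMETDS}); it is this positivity, a genuinely dynamical input, that simultaneously rules out $-1$ and non-trivial multiples. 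Without an ingredient of this kind your (iv)$\Rightarrow$(iii) is unproved, and since in your scheme (ii), (v) and (vi) reach (i) only through (iv)$\Rightarrow$(iii), the gap propagates to the whole theorem.
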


Before ending this section,
we provide several notations and  a lemma which will be useful in the proof of 
Theorem \ref{thm:main} in the next section.
For $n\in \N$, we denote by $B_n(X_A)$ the set of admissible words of $X_A$
with length $n$.
 We denote by $C(X_A,\Z)$
the set of integer valued continuous functions on $X_A$. 
It has a natural structure of abelian group by pointwise addition of functions.
For $f \in C(X_A,\Z)$ and $k \in \N$,
we set 
$f^k(x) = \sum_{i=0}^{k-1}f(\sigma_A^i(x))$ for $x \in X_A$. 
As $f$ is regarded as an element of $\DA$, 
we may define an automorphism $\rho^{A,f}_t$ for $t \in \R/\Z$ on $\OA$ 
by setting
\begin{equation*}
\rho^{A,f}_t(S_i) = e^{2 \pi \sqrt{-1} t f}S_i, \qquad i=1,\dots,N 
\end{equation*}
which gives rise to an action of $\T$ on $\OA$.
If in particular $f\equiv 1_{X_A}$, the action
$\rho^{A,f}$ is the gauge action $\rho^{A}.$

Suppose that $(X_A, \sigma_A)\underset{\coe}{\sim}(X_B,\sigma_B).$
Let $h:X_A\rightarrow X_B$ be a homeomorphism
and
$k_1,l_1: X_A\rightarrow \Zp,
 k_2,l_2: X_B\rightarrow \Zp 
$
be continuous functions satisfying \eqref{eq:orbiteq1x} and \eqref{eq:orbiteq2y}
respectively.
As in \cite{MMETDS},
the homomorphism
$\Psi_h: C(X_B,\Z) \rightarrow C(X_A,\Z)$
defined by
\begin{equation}
\Psi_{h}(g)(x)
= \sum_{i=0}^{l_1(x)-1} g(\sigma_B^i(h(x))) 
- \sum_{j=0}^{k_1(x)-1} g(\sigma_B^j(h(\sigma_A(x))))
\label{eq:Psih}
\end{equation}
for
$g \in C(X_B,\Z), \ x \in X_A$
and its inverse 
$\Psi_{h^{-1}}: C(X_A,\Z) \rightarrow C(X_B,\Z)$
gives rise to an isomorphism of  groups.
By \cite{MaPre2015}, there exists an isomorphism
$\Phi: \OA \longrightarrow \OB$ such that 
\begin{equation*}
\Phi(\DA) = \DB
\quad \text{  and } \quad
\Phi \circ \rho^{A,\Psi_h(g)}_t = \rho^{B,g}_t \circ \Phi, 
\qquad g \in C(X_B,\Z), \, t \in {\mathbb{T}}.
\end{equation*}

\begin{lemma}\label{lem:useful}
Let
$(X_A, \sigma_A)$ and $(X_B,\sigma_B)$ 
be continuously  orbit equivalent given by 
a homeomorphism  
$h: X_A \rightarrow X_B$ 
 with continuous functions 
$k_1,l_1: X_A\rightarrow \Zp,
 k_2,l_2: X_B\rightarrow \Zp 
$
satisfying
\eqref{eq:orbiteq1x}
and
\eqref{eq:orbiteq2y}.
Assume that either of the  cocycle functions 
$c_1 = l_1 - k_1$ on $X_A$ or 
$c_2 = l_2 - k_2$ on $X_B$
is  constant.
Then both of the functions are $1$.
\end{lemma}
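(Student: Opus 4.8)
The plan is to exploit the group isomorphism $\Psi_h\colon C(X_B,\Z)\to C(X_A,\Z)$ of \eqref{eq:Psih} together with its inverse $\Psi_{h^{-1}}$. First I would record the effect of these maps on the constant function $1$. Substituting $g=1_{X_B}$ into \eqref{eq:Psih} collapses both sums to their lengths, so that $\Psi_h(1_{X_B}) = l_1 - k_1 = c_1$; symmetrically, the analogous formula for $\Psi_{h^{-1}}$ (built from $k_2,l_2$) gives $\Psi_{h^{-1}}(1_{X_A}) = l_2-k_2 = c_2$. Since $\Psi_{h^{-1}}=\Psi_h^{-1}$, applying $\Psi_{h^{-1}}$ to $c_1 = \Psi_h(1_{X_B})$ returns $1_{X_B}$.

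Now suppose $c_1$ is constant, say $c_1 = c\cdot 1_{X_A}$ with $c\in\Z$. Because $\Psi_{h^{-1}}$ is additive, hence $\Z$-linear, I obtain
\[
1_{X_B} = \Psi_{h^{-1}}(c\cdot 1_{X_A}) = c\,\Psi_{h^{-1}}(1_{X_A}) = c\cdot c_2,
\]
so that $c\,c_2(y)=1$ for every $y\in X_B$. As these are integers, the only possibilities are $c=c_2\equiv 1$ or $c=c_2\equiv -1$; in particular $c_2$ is automatically constant and equal to $c$, and the argument run from the hypothesis that $c_2$ is constant is entirely symmetric. It therefore remains only to exclude the sign $c=-1$, and this is the one genuinely nontrivial point.

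To rule out $c=-1$ I would pass to the Cuntz--Krieger algebras. A constant cocycle $c_1=c\cdot 1_{X_A}$ means $\rho^{A,c_1}_t=\rho^A_{ct}$, so the isomorphism $\Phi\colon\OA\to\OB$ with $\Phi(\DA)=\DB$ and $\Phi\circ\rho^{A,\Psi_h(g)}_t=\rho^{B,g}_t\circ\Phi$ recalled above (with $g=1_{X_B}$) satisfies $\Phi\circ\rho^A_{ct}=\rho^B_t\circ\Phi$. If $c=-1$, then $v_i:=\Phi(S_i)$ lies in the $(-1)$-spectral subspace of the gauge action on $\OB$, since $\rho^B_t(v_i)=\Phi(\rho^A_{-t}(S_i))=e^{-2\pi\sqrt{-1}t}v_i$. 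These $v_i$ still form a Cuntz--Krieger family: $\sum_i v_iv_i^* = \Phi(1)=1$, while $\sum_i v_i^*v_i = \Phi\big(\sum_i S_i^*S_i\big)\ge \Phi(1)=1$ because $\sum_i S_i^*S_i = \sum_j(\sum_i A(i,j))S_jS_j^* \ge \sum_j S_jS_j^* = 1$ (every column sum of the irreducible matrix $A$ is at least $1$). Let $\phi_B$ be a KMS state for the gauge action $\rho^B$ at inverse temperature $\beta=\log\lambda_B$, where $\lambda_B>1$ is the Perron--Frobenius eigenvalue of $B$. The KMS condition applied to degree $-1$ elements gives $\phi_B(vv^*)=\lambda_B\,\phi_B(v^*v)$, whence
\[
1 = \phi_B(1) = \sum_i \phi_B(v_iv_i^*) = \lambda_B\sum_i \phi_B(v_i^*v_i) = \lambda_B\,\phi_B\Big(\sum_i v_i^*v_i\Big) \ge \lambda_B > 1,
\]
a contradiction. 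Hence $c=1$, and combined with the second paragraph $c_1\equiv c_2\equiv 1$.

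The main obstacle is precisely this last step: the reduction to $c=\pm1$ is formal, but eliminating the orientation-reversing sign genuinely uses the one-sidedness of the shifts, since $c=-1$ would match the forward orbit of $\sigma_A$ with a nonexistent backward orbit of $\sigma_B$. The gauge/KMS computation packages this cleanly; equivalently one could argue dynamically that $\int_{X_A} c_1\,d\mu>0$ for a $\sigma_A$-invariant measure $\mu$ (the average of the cocycle represents a ratio of topological entropies), which again forbids $c_1\equiv -1$.
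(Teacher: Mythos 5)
Your proof is correct, and it reaches the paper's conclusion by a partly different route. For the reduction to $C_1C_2=1$: the paper invokes the cocycle identity of \cite[Lemma 3.3]{MMKyoto} and sums the constant cocycle along orbit segments to obtain $c_2(y)C_1=1$ for all $y\in X_B$, while you apply $\Psi_{h^{-1}}=\Psi_h^{-1}$ to $c_1=\Psi_h(1_{X_B})$ and use additivity of the homomorphism \eqref{eq:Psih}; these rest on the same underlying fact (the identity the paper cites is essentially what makes $\Psi_{h^{-1}}$ invert $\Psi_h$), so this step differs only in packaging. The genuine divergence is in eliminating $C_1=C_2=-1$. The paper stays dynamical: by \cite[Corollary 5.9]{MMETDS} the sum $\sum_{i=0}^{r-s-1}c_1(\sigma_A^{s+i}(x))$ is strictly positive at any eventually periodic point with $\sigma_A^r(x)=\sigma_A^s(x)$, $r>s$, and such points exist since $A$ is irreducible and not a permutation, so the constant $C_1$ must be positive. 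You instead transport the problem to $\mathcal{O}_B$: the intertwining $\Phi\circ\rho^A_{-t}=\rho^B_t\circ\Phi$ places the Cuntz--Krieger family $v_i=\Phi(S_i)$ in the degree $-1$ spectral subspace of the gauge action, and the KMS condition at inverse temperature $\log\lambda_B$ forces $1\geq\lambda_B>1$. Your estimates all check out: every column of the irreducible matrix $A$ contains a $1$, so $\sum_i v_i^*v_i\geq 1$; degree $-1$ elements satisfy $\phi_B(vv^*)=\lambda_B\phi_B(v^*v)$; and $\lambda_B>1$ because $B$ is irreducible and not a permutation. The trade-off is that your route needs two external inputs the paper avoids, namely the existence of a gauge KMS state at $\beta=\log\lambda_B$ (Enomoto--Fujii--Watatani) and Perron--Frobenius theory, whereas the paper only needs the already-cited positivity result from \cite{MMETDS}; in return, your argument isolates a conceptual reason the orientation cannot reverse (positivity of the inverse temperature), and your closing measure-theoretic remark is essentially an averaged version of the periodic-point positivity that the paper uses pointwise.
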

\begin{proof}
Suppose that $c_1$ is a constant function taking value $C_1$.
By \cite[Lemma 3.3]{MMKyoto},
the equality 
\begin{equation}
k_1^{l_2(y)}(h^{-1}(y)) + l_1^{k_2(y)}(h^{-1}(\sigma_B(y))) +1 
= k_1^{k_2(y)}(h^{-1}(\sigma_B(y))) + l_1^{l_2(y)}(h^{-1}(y)) \label{eq:13}
\end{equation}
holds. Since
\begin{align*}
  & l_1^{l_2(y)}(h^{-1}(y)) - k_1^{l_2(y)}(h^{-1}(y)) \\
= & \sum_{i=0}^{l_2(y)-1} l_1(\sigma_A^i(h^{-1}(y))) -
   \sum_{i=0}^{l_2(y)-1} k_1(\sigma_A^i(h^{-1}(y))) 
= l_2(y) C_1,
\end{align*}
and similarly
\begin{equation*}
 l_1^{k_2(y)}(h^{-1}(\sigma_B(y))) - k_1^{k_2(y)}(h^{-1}(\sigma_B(y)))
= k_2(y) C_1,
\end{equation*}
the equality \eqref{eq:13} ensures us that
$l_2(y)C_1 - k_2(y) C_1 =1$
so that 
$c_2(y) C_1 =1$ for all $y \in X_B$. 
Hence the function $c_2$ is also constant whose value is written $C_2$. 
We then have $C_1\cdot C_2 = 1$.
Since both $C_1$ and $C_2$ are integers,
we have $C_1 = C_2 =1$ or  $C_1 = C_2 =-1.$
As in \cite[Corollary 5.9]{MMETDS},
we see that
$$
\sum_{i=0}^{r-s-1}c_1(\sigma_A^{s+i}(x)) >0
\quad
\text{  for } x \in X_A
\text{  with } 
\sigma_A^r(x) =\sigma_A^s(x), r-s>0.
$$
Hence $C_1$ must be positive, so that we have
$C_1 = C_2 = 1$.
\end{proof}

\section{Proof of Theorem \ref{thm:main}}
This section is devoting to proving Theorem \ref{thm:main}.

{ (i) $\Longrightarrow$ (ii):\,\,}
Suppose that 
$(X_A, \sigma_A)\underset{\event}{\approx}(X_B,\sigma_B).$
Take a homeomorphism $h:X_A\longrightarrow X_B$ and $K_1, K_2 \in \N$
satisfying \eqref{eq:oneorbiteq1K}, \eqref{eq:oneorbiteq2K}.
For any 
$\tau_1\in \Gamma_A^{\AF},
$
there exists
$K_{\tau_1}$
such that 
$$
\sigma_A^{K_{\tau_1}}(\tau_1(x))  = \sigma_A^{K_{\tau_1}}(x), \qquad x \in X_A.
$$
By \eqref{eq:oneorbiteq1K}, 
we have
$$
\sigma_B^{K_1} (h(\sigma_A^{K_{\tau_1}}(x))) 
 = \sigma_B^{K_1+K_{\tau_1}}(h(x))
\quad \text{ for} \quad 
x \in X_A, 
$$ 
so that 
$$
\sigma_B^{K_1+K_{\tau_1}} (h(\tau_1(x))) 
 =\sigma_B^{K_1} (h(\sigma_A^{K_{\tau_1}}(\tau_1(x)))) 
  = \sigma_B^{K_1+K_{\tau_1}}(h(x))
\quad \text{ for} \quad 
x \in X_A.
$$ 
Similarly there exists $K_{\tau_2} \in \N$ for 
$\tau_2\in \Gamma_B^{\AF}$ such that 
$$
\sigma_A^{K_2+K_{\tau_2}} (h^{-1}(\tau_2(y))) 
 = \sigma_A^{K_2+K_{\tau_2}}(h^{-1}(y))
\quad \text{ for } \quad 
y \in X_B,
$$
so that
$(X_A, \sigma_A)\underset{\ucoe}{\sim}(X_B,\sigma_B).$



{ (ii) $\Longrightarrow$ (i):\,\,}
A point $x \in X_A$ is said to be eventually periodic
if there exist
$r, s \in \Zp$ with $r -s >0$ such that 
 $\sigma_A^r(x) = \sigma_A^s(x)$.
As the matrix $A$ is irreducible and not any permutations, 
the set 
$X_A^{nep}$ of non eventually periodic points of $X_A$ is dense in $X_A$. 
Suppose that 
$(X_A, \sigma_A)\underset{\ucoe}{\sim}(X_B,\sigma_B).$
Take a homeomorphism
$h: X_A \rightarrow X_B$ 
and continuous functions 
$k_1,l_1: X_A\rightarrow \Zp,
 k_2,l_2: X_B\rightarrow \Zp 
$
satisfying the conditions
\eqref{eq:orbiteq1x},
\eqref{eq:orbiteq2y}
and
\eqref{eq:uorbiteq1K},
\eqref{eq:uorbiteq2K}.
Put
$c_i = l_i - k_i, i=1,2$.
We will first show that both $c_1$ and $c_2$ are constant. 
Suppose that $c_1$ is not constant.
We may find $z \in X_A^{nep}$ and $\tau \in \Gamma_A^{\AF}$
such that 
$c_1(z) \ne c_1(\tau(z))$.
Since we may take 
$k \in \N$ such that 
$\sigma_A^k(z) = \sigma_A^k(\tau(z))$,
the set
$$
S_0 = \{k \in \N \mid \exists x \in X_A^{nep} ; \exists \tau \in \Gamma_A^{\AF};
  c_1(x) \ne c_1(\tau(x)),  \sigma_A^k(x) = \sigma_A^k(\tau(x)) \}
$$
is not empty. We put 
$K_0 = \Min S_0$.
Take $x \in S_0$ and $\tau \in \Gamma_A^{\AF}$
such that 
\begin{equation}
c_1(x) \ne c_1(\tau(x)), \qquad 
\sigma_A^{K_0}(x) = \sigma_A^{K_0}(\tau(x)). \label{eq:11}
\end{equation}
By \eqref{eq:orbiteq1x} or \cite[Lemma 3.1]{MMETDS}, we have
$$
\sigma_B^{k_1^{K_0}(x)}(h(\sigma_A^{K_0}(x))) =\sigma_B^{l_1^{K_0}(x)}(h(x)),
$$
so that
$$
\sigma_B^{k_1^{K_0}(x)}(h(\sigma_A^{K_0}(\tau(x)))) =\sigma_B^{l_1^{K_0}(x)}(h(x)).
$$
Hence we have 
$$
\sigma_B^{k_1^{K_0}(x) + k_1^{K_0}(\tau(x))}(h(\sigma_A^{K_0}(\tau(x)))) 
=\sigma_B^{l_1^{K_0}(x) +k_1^{K_0}(\tau(x))}(h(x)),
$$
and
\begin{equation*}
\sigma_B^{k_1^{K_0}(x) + l_1^{K_0}(\tau(x))}(h(\tau(x))) 
=\sigma_B^{l_1^{K_0}(x) +k_1^{K_0}(\tau(x))}(h(x)). \label{eq:2}
\end{equation*}
Since there exists $K \in \N$ such that 
\begin{equation*}
\sigma_B^K(h(\tau(x))) = \sigma_B^K(h(x)), \label{eq:3} 
\end{equation*}
we have
\begin{equation*}
\sigma_B^{k_1^{K_0}(x) + l_1^{K_0}(\tau(x)) +K}(h(\tau(x))) 
=\sigma_B^{l_1^{K_0}(x) +k_1^{K_0}(\tau(x)) +K}(h(\tau(x))). 
\end{equation*}
By the discussion in \cite[Section 6]{MMETDS}, 
homeomorphism $h$ giving rise to a continuous orbit equivalence
preserves eventually periodic points.
As $\tau(x) \in X_A^{nep}$, we see 
$h(\tau(x)) \in X_A^{nep}$
so that
\begin{equation*}
k_1^{K_0}(x) + l_1^{K_0}(\tau(x)) +K
=l_1^{K_0}(x) +k_1^{K_0}(\tau(x)) +K
\end{equation*}
which implies 
 $l_1^{K_0}(x) - k_1^{K_0}(x) = l_1^{K_0}(\tau(x)) - k_1^{K_0}(\tau(x)) 
$
and hence
$c_1^{K_0}(x) = c_1^{K_0}(\tau(x)).$
This means  that
\begin{equation}
\sum_{i=0}^{K_0-1} c_1(\sigma_A^i(x))  
=
\sum_{i=0}^{K_0-1} c_1(\sigma_A^i(\tau(x))).
\label{eq:31}
\end{equation}
If there exists $m\in \N$ such that 
$1\le m \le K_0 -1$ and
$c_1(\sigma_A^m(x)) \ne 
c_1(\sigma_A^m(\tau(x)))$,
Put
$\bar{x} = \sigma_A^m(x)$.
One may find $\bar{\tau} \in \Gamma_A^{\AF}$
such that 
$\bar{\tau}(\bar{x})=\sigma_A^m(\tau(x)).$
As
$c_1(\sigma_A^m(x)) \ne 
c_1(\sigma_A^m(\tau(x)))$
and
$\sigma_A^{K_0}(x) = \sigma_A^{K_0}(\tau(x)),
$
we have
\begin{equation*}
c_1(\bar{x})) \ne c_1(\bar{\tau}(\bar{x})),
\qquad
\sigma_A^{K_0-m}(\bar{x}) = \sigma_A^{K_0-m}(\bar{\tau}(\bar{x})).
\end{equation*} 
This is a contradiction of the minimality of $K_0$.
Hence we see that 
\begin{equation}
c_1(\sigma_A^m(x)) =c_1(\sigma_A^m(\tau(x))) \quad\text{ for all } m \text{ with }
1\le m \le K_0-1.\label{eq:32}
\end{equation}
By \eqref{eq:31} and \eqref{eq:32},
we see that  $c_1(x) = c_1(\tau(x))$, a contradiction to \eqref{eq:11}.
Hence we conclude that $c_1$ is constant.
By Lemma \ref{lem:useful}, we know that
$c_1 \equiv c_2 \equiv1$,
so that
$(X_A, \sigma_A)\underset{\event}{\approx}(X_B,\sigma_B).$


{ (i) $\Longleftrightarrow$ (iii):\,\,} These implications come from Proposition \ref{prop:1.4}.


{ (iii) $\Longrightarrow$ (iv):\,\,} This implication is obvious.


{ (iv) $\Longrightarrow$ (iii):\,\,} 
Assume that  there exists an isomorphism
$\Phi:\OA \rightarrow \OB$ 
satisfying
$\Phi(\DA) = \DB$
and 
$\Phi(\FA) = \FB.$
Put
$\gamma_t^B = \Phi \circ \rho^A_t \circ \Phi^{-1}$
which is an automorphism for each $t \in \T$.
Since $\Phi(\FA) = \FB$, we see that
$\gamma_t^B(a) =a$ for all $a \in \FB$.
Let us denote by $S_1,\dots,S_M$ the generating partial isometries 
of $\OB$ satisfying the relations \eqref{eq:CK} for the matrix $B$.
Put $W_t =\sum_{i=1}^M\gamma_t^B(S_i) S_i^*$.
For $\mu=(\mu_1,\dots,\mu_n),
        \nu=(\nu_1,\dots,\nu_n)\in B_n(X_B)$, 
we put
$S_\mu = S_{\mu_1}\cdots S_{\mu_n}, 
  S_\nu = S_{\nu_1}\cdots S_{\nu_n}.
$
It then follows that
\begin{align*}
W_t S_\mu S_\nu^* 
=& \sum_{i=1}^M\gamma_t^B(S_i) S_i^* S_{\mu_1}\cdots S_{\mu_n} 
   S_{\nu_n}^* \cdots S_{\nu_1}^* \\ 
=& \gamma_t^B(S_{\mu_1}) S_{\mu_1}^* S_{\mu_1}\cdots S_{\mu_n} 
   S_{\nu_n}^*\cdots S_{\nu_1}^* \\
=& \gamma_t^B(S_{\mu_1} S_{\mu_1}^* S_{\mu_1}\cdot 
   S_{\mu_2}\cdots S_{\mu_n} S_{\nu_n}^* \cdots S_{\nu_1}^* S_{\nu_1}) S_{\nu_1}^* \\
=& \gamma_t^B(S_{\mu}S_{\nu}^* S_{\nu_1}) S_{\nu_1}^* \\
=& S_{\mu}S_{\nu}^* \gamma_t^B(S_{\nu_1}) S_{\nu_1}^* \\
=& S_{\mu}S_{\nu}^*S_{\nu_1} S_{\nu_1}^* \sum_{i=1}^M\gamma_t^B(S_i) S_i^* \\
=& S_{\mu}S_{\nu}^* W_t
\end{align*}
so that $W_t$ commutes with all elements of $\FB.$
Since an element of $\OA$ commuting with $\FA$ must be scalar,
 the correspondence
$t \in \T \longrightarrow W_t \in {\mathbb{C}}$
gives rise to a character of $\T$.
One may find an integer 
$c_B \in \Z$ such that 
$W_t = e^{2 \pi \sqrt{-1}c_B t}, \, t \in \R/\Z = \T$
so that  
$\gamma_t^B(S_i) = e^{2 \pi \sqrt{-1}c_B t} S_i = \rho^{B,c_B}_t(S_i), i=1,\dots,M.$
Hence we have
\begin{equation}
\Phi \circ \rho_t^A = \rho^{B,c_B}_t \circ \Phi , \qquad t \in \T. \label{eq:rhobct}
\end{equation}
Now $\Phi:\OA \longrightarrow \OB$ is an isomorphism such that $\Phi(\DA) = \DB$.
By \cite{MaPacific},
there is a homeomorphism 
$h:X_A \longrightarrow X_B$ which gives rise to continuous orbit equivalence
between $(X_A,\sigma_A)$ and $(X_B, \sigma_B)$ such that
$\Phi(f) = f \circ h^{-1}$ for $f \in C(X_A) =\DA.$ 
For the homeomorphism 
$h:X_A \longrightarrow X_B$,
let $\Phi_h : \OA \longrightarrow \OB$ be the isomorphism induced from $h$ 
defined in \cite{MaPacific}.
It satisfies $\Phi_h(\DA) = \DB$ and $\Phi_h = \Phi$ on $\DA$.
We then have by \eqref{eq:Psih} (\cite{MaPre2015})
\begin{equation*}
\Phi_h \circ \rho^{A,g}_t = \rho^{B,\Psi_{h^{-1}}(g)}_t \circ \Phi_h \quad 
\text{ for } g \in C(X_A,\Z).
\end{equation*}
 Since $\Psi_{h^{-1}}(1) = l_2 - k_2 =c_2 \in C(X_B,\Z)$
is  the cocycle function for $h^{-1}$,
we have
\begin{equation}
\Phi_h \circ \rho^{A}_t = \rho^{B,c_2}_t \circ \Phi_h, \qquad t \in \T. \label{eq:rhobc2}
\end{equation}
Put
$
\alpha = \Psi_h^{-1} \circ \Phi
$ 
which is an automorphism on $\OA$ such that
$\alpha |_{\DA} = \id$.
Let us next denote by 
$S_1,\dots,S_N$ the generating partial isometries 
of $\OA$ satisfying the relations \eqref{eq:CK}.
By \cite[Theorem 6.5 (1)]{MaPacific}, 
there exists a  unitary one-cocycle $V_\alpha(k) $ in $\DA$ such that 
$\alpha(S_\mu) = V_\alpha(k)S_\mu$ for $\mu\in  B_k(X_A).$
Hence we have
$\alpha(S_i) = V_\alpha(1) S_i, i=1,\dots,N$ so that 
\begin{equation}
\Phi(S_i) = \Phi_h(V_\alpha(1)S_i), \qquad i=1,\dots,N. \label{eq:PhiPhih} 
\end{equation}
By \eqref{eq:rhobct} and \eqref{eq:rhobc2}, 
the following equalities hold respectively
\begin{align}
e^{2\pi\sqrt{-1} t} \Phi(S_i)
&=\rho^{B,c_B}_t( \Phi(S_i))
= \Phi_h(V_\alpha(1)) \rho^{B,c_B}_t(\Phi_h(S_i)) \label{eq:rhobct1p} \\
\intertext{ and }
e^{2\pi\sqrt{-1} t}\Phi_h(V_\alpha(1)S_i)
= &\Phi_h(V_\alpha(1)\rho^{A}_t(S_i))  
= \rho^{B,c_2}_t(\Phi_h(V_\alpha(1)) \Phi_h(S_i)) \\
=&\Phi_h(V_\alpha(1)) \rho^{B,c_2}_t(\Phi_h(S_i)).
\label{eq:rhobc3}
\end{align}
By \eqref{eq:PhiPhih}, \eqref{eq:rhobct1p} and \eqref{eq:rhobc3},
we obtain
\begin{equation*}
\rho^{B,c_B}_t(\Phi_h(S_i)) =  \rho^{B,c_2}_t(\Phi_h(S_i)), \qquad i=1,\dots,N,\,  t \in \T. 
\end{equation*}
Thus we have
$c_2 = c_B$ a constant. 
By Lemma \ref{lem:useful},
we obtain that 
both cocycle functions $c_1$ and $c_2$ are constant $1$ so that 
$$
\Phi \circ \rho^A_t = \rho^B_t \circ \Phi, \qquad t \in \T.
$$


{ (ii) $\Longrightarrow$ (vi):\,\,} 
Let $h: X_A \rightarrow X_B$ 
be a homeomorphism 
giving rise to a continuous orbit equivalence between
$(X_A, \sigma_A)$ and $(X_B,\sigma_B)$.
By \cite{MaPacific},
one knows that 
$h \circ \Gamma_A \circ h^{-1} =\Gamma_B.
$
Since $h$ satisfies \eqref{eq:uorbiteq1K}
and \eqref{eq:uorbiteq2K},
for any $\tau_1 \in \Gamma_A$, there exists $K_{\tau_1} \in \N$ such that 
\begin{equation*}
\sigma_B^{K_{\tau_1}}(h \circ \tau_1 \circ h^{-1}(y)) =  
\sigma_B^{K_{\tau_1}}(y), \qquad y \in X_B.
\end{equation*}
Hence we have $h \circ \tau_1 \circ h^{-1} \in \Gamma_B^{\AF}.$
We similarly know that 
$h^{-1} \circ \tau_2 \circ h \in \Gamma_A^{\AF}$
for
$\tau_2 \in \Gamma_B^{\AF}$.


{ (vi) $\Longrightarrow$ (v):\,\,} This implication is obvious. 


{ (v) $\Longrightarrow$ (ii):\,\,} 
Suppose that 
there exists an isomorphism $\xi: \Gamma_A \longrightarrow \Gamma_B$
of groups such that $\xi(\Gamma_A^{\AF}) = \Gamma_B^{\AF}$.
By the proof of \cite[Theorem 7.2]{MaIsrael2015},
there exists a homeomorphism
$h:X_A\longrightarrow X_B$ which gives rise to $\xi$ such as 
\begin{equation}
\xi(\gamma)(y) = h(\gamma(h^{-1}(y))) \quad \text{ for } \gamma \in \Gamma_A, \, y \in X_B.
\end{equation}
Hence the actions
$\Gamma_A$ 
on
$X_A$
 and
$\Gamma_B$
on
$X_B$ 
are topologically conjugate
so that
$h \circ \Gamma_A\circ h^{-1} = \Gamma_B$.
By \cite{MaPacific},
the homeomorphism 
$h:X_A\longrightarrow X_B$ 
gives rise to a continuous orbit equivalence between 
$(X_A, \sigma_A)$ and $(X_B,\sigma_B)$.
By hypothesis, we have
$$
h \circ \tau_1 \circ h^{-1} = \xi(\tau_1) \in \Gamma_B^{\AF}\quad 
\text{ for } \tau_1 \in \Gamma_A^{\AF}.
$$
Hence 
for $\tau_1 \in \Gamma_A^{\AF}$,
there exists $K_{\tau_1}' \in \N$ such that 
\begin{equation}
\sigma_B^{K_{\tau_1}'}(h \circ \tau_1 \circ h^{-1}(y)) =  
\sigma_B^{K_{\tau_1}'}(y), \qquad y \in X_B.
\end{equation}
Put $x = h^{-1}(y)\in X_A$,
we have
\begin{equation}
\sigma_B^{K_{\tau_1}'}(h(\tau_1(x))) =  
\sigma_B^{K_{\tau_1}'}(h(x)), \qquad x \in X_A.
\end{equation}
Similarly we have
for $\tau_2 \in \Gamma_B^{\AF}$,
there exists $K_{\tau_2}' \in \N$ such that 
\begin{equation}
\sigma_A^{K_{\tau_2}'}(h^{-1}(\tau_2(y))) =  
\sigma_A^{K_{\tau_2}'}(h^{-1}(y)), \qquad y \in X_B.
\end{equation}
Hence 
the homeomorphism 
$h:X_A\longrightarrow X_B$ 
gives rise to a uniformly continuous orbit equivalence between 
$(X_A, \sigma_A)$ and $(X_B,\sigma_B)$.


Thefore we complete the proof of Theorem \ref{thm:main}.
 \qed


\section{Subclasses in continuous orbit equivalence class}

In this final section, we summarize relationships among several subequivalence relations  
in continuous orbit equivalence of one-sided topological Markov shifts. 
Suppose that $(X_A, \sigma_A)\underset{\coe}{\sim}(X_B,\sigma_B).$
Let $h:X_A\rightarrow X_B$ be a homeomorphism
and
$k_1,l_1: X_A\rightarrow \Zp,
 k_2,l_2: X_B\rightarrow \Zp 
$
be continuous functions satisfying \eqref{eq:orbiteq1x} and \eqref{eq:orbiteq2y}
respectively.
As in \cite{MMETDS},
the homomorphism
$\Psi_h: C(X_B,\Z) \rightarrow C(X_A,\Z)$
defined in \eqref{eq:Psih}
and its inverse 
$\Psi_{h^{-1}}: C(X_A,\Z) \rightarrow C(X_B,\Z)$
give rise to isomorphisms of the ordered abelian groups
between $H^A =C(X_A,\Z)/\{ g - g\circ\sigma_A \mid g \in C(X_A,\Z\} $ and $H^B$.
By definition of $\Psi_h$, we have that
$\Psi_h(1_{X_B})  = l_1 - k_1= c_1$ and similarly 
$\Psi_{h^{-1}} (1_{X_A})  = c_2$.
If $[c_1] = [1_{X_A}] $ in $H^A$
and
$[c_2] = [1_{X_B}] $ in $H^B$,
$(X_A, \sigma_A)$ and $(X_B,\sigma_B)$
 are said to be {\it strongly continuously orbit equivalent}\/, written
 $(X_A, \sigma_A)\underset{\scoe}{\sim}(X_B,\sigma_B)$
(\cite{MaJOT2015}).
If in particular $c_1 = 1_{X_A}$
and
$c_2 = 1_{X_B}$,
$(X_A, \sigma_A)$ and $(X_B,\sigma_B)$
 are eventually one-sided conjugate.
Then the following implications hold (cf. \cite{MaJOT2015}, \cite{MaPre2015}).
\medskip

\begin{align*}
&\hspace{58mm}\text{ UOE} \\
& \hspace{6cm} \Uparrow(0) \\
&\hspace{55mm}\text{ UCOE}\hspace{1cm}
  \overset{(1)}{\Longrightarrow} \hspace{1cm} \text{SCOE}\hspace{5mm}
   \overset{(2)}{\Longrightarrow} \hspace{3mm} \text{COE} \\
& \hspace{6cm} \Updownarrow(3) 
   \hspace{3cm} \Downarrow(4)  \\
& 
\text{one-sided conjugate} 
   \overset{(5)}{\Longrightarrow} \text{eventually one-sided conjugate}
    \hspace{5mm} \text{two-sided conjugate} 
\end{align*}

\medskip

The implications (0),  (1) , (2) and (5) are obvious.
The implications (3) come from Theorem \ref{thm:main}.
The implication (4) has been shown in \cite{MaJOT2015}.
Consider the following matrices
\begin{gather*}
A_2=
\begin{bmatrix}
1 & 1 \\
1 & 1
\end{bmatrix},
\qquad
F_2=
\begin{bmatrix}
1 & 1 \\
1 & 0
\end{bmatrix}, 
\qquad
B_2=
\begin{bmatrix}
1 & 1& 0 \\
1 & 0& 1 \\
1 & 0& 1 
\end{bmatrix}, \\
B_3=
\begin{bmatrix}
1 & 1& 1 \\
1 & 1& 1 \\
1 & 0& 0 
\end{bmatrix},
\qquad
C_3=
\begin{bmatrix}
1 & 1& 1 \\
1 & 1& 0 \\
1 & 1& 0 
\end{bmatrix},
\qquad
A_4=
\begin{bmatrix}
1 &1 & 1& 1 \\
1 &1 & 1& 1 \\
1 &1 & 1& 1 \\
1 &1 & 1& 1 
\end{bmatrix}.
\end{gather*}

Since
${\mathcal{F}}_{A_2}$ and ${\mathcal{F}}_{A_4}$
are the UHF algebras 
$M_{2^{\infty}}$ and $M_{4^{\infty}}$,
respectively,
there exists an isomorphism 
$\Phi:{\mathcal{F}}_{A_2} \longrightarrow {\mathcal{F}}_{A_4}$
such that
$\Phi({\mathcal{D}}_{A_2})={\mathcal{D}}_{A_4},$
so that 
$(X_{A_2},\sigma_{A_2}) \underset{\uoe}{\sim}(X_{A_4},\sigma_{A_4}).$
We however know that
${\mathcal{O}}_{A_2}= {\mathcal{O}}_{2} \ncong {\mathcal{O}}_{4}= {\mathcal{O}}_{A_4},$
so that 
$(X_{A_2},\sigma_{A_2}) \underset{\ucoe}{\not\sim}(X_{A_4},\sigma_{A_4}).$
Hence the converse of (0) does not necessarily hold.

By \cite{MaJOT2015}, we have shown that 
$
(X_{A_2},\sigma_{A_2}) \underset{\scoe}{\sim}(X_{B_2},\sigma_{B_2}).
$
As we know that
$$
K_0({\mathcal{F}}_{B_2}) = 
\Z^3 \overset{B_2^t}{\longrightarrow} \Z^3 \overset{B_2^t}{\longrightarrow}\cdots,
$$
 there exists an order preserving isomorphism
$\xi: K_0({\mathcal{F}}_{B_2}) \longrightarrow \Z[\frac{1}{2}] (\subset \R)$
such that 
$\xi([1]) =3 \in \R$.
Hence
$(K_0({\mathcal{F}}_{B_2}),[1]) \not\cong(K_0({\mathcal{F}}_{A_2}),[1])$
so that 
the AF algebra ${\mathcal{F}}_{B_2}$ is not isomorphic to 
${\mathcal{F}}_{A_2}$. 
This shows that
$
(X_{A_2},\sigma_{A_2}) \underset{\ucoe}{\not\sim}(X_{B_2},\sigma_{B_2}),
$
and the converse of (1) does not necessarily hold.

Since
${\mathcal{O}}_{A_2} \cong {\mathcal{O}}_{F_2} $ and $\det(\id -A_2) = \det(\id -F_2)$,
we have by Theorem \ref{thm:mm}
$(X_{A_2},\sigma_{A_2}) \underset{\coe}{\sim}(X_{F_2},\sigma_{F_2})$,
whereas
their two-sided topological Markov shifts
$(\bar{X}_{A_2},\bar{\sigma}_{A_2})$ and $(\bar{X}_{F_2},\bar{\sigma}_{F_2})$
are not topologically conjugate
so that
$(X_{A_2},\sigma_{A_2}) \underset{\scoe}{\not\sim}(X_{F_2},\sigma_{F_2}).$
Hence the converse of (2) does not necessarily hold.

Although the two-sided topological Markov shifts
$(\bar{X}_{B_3},\bar{\sigma}_{B_3})$ and $(\bar{X}_{C_3},\bar{\sigma}_{C_3})$
are topologically conjugate, 
we know that
${\mathcal{O}}_{B_3} \cong {\mathcal{O}}_3 
\ncong
{\mathcal{O}}_3\otimes M_2({\mathbb{C}})
\cong {\mathcal{O}}_{C_3}$
by \cite{EFW}. 
Hence the converse of (4) does not necessarily hold.
 
The converse of the implication (5) is an open question as in \cite{MaPre2015}.
\medskip

{\it Acknowledgments:}
This work was supported by JSPS KAKENHI Grant Number 15K04896.


\end{document}